\newtheorem{theorem}{Theorem}[section]
\newtheorem{lemma}{Lemma}[section]
\newcommand{\transpose}{\text{${}^{\text{T}}$}}
 \newcommand{\T}{\transpose}
\newcommand{\Z}{\mathbb{Z}} \newcommand{\C}{\mathcal{C}}
\newenvironment{ssmatrix}{\left(\begin{smallmatrix}}{\end{smallmatrix}\right)}
\newcommand{\cross}{\times} \newcommand{\h}{\widetilde}
\newcommand{\ct}{\cdot} \newcommand{\di}{\displaystyle}
\newcommand{\al}{\alpha} 
\newcommand{\mat}{\sim}
 \newcommand{\ti}{\times}
\title{The Hadamard circulant conjecture}
\author[B. Hurley, P. Hurley, T. Hurley]{Barry Hurley, Paul
Hurley \and Ted Hurley} \date{} 
\let\@journal\relax
\begin{document}

\maketitle

\begin{abstract}
It is shown that if $H$ is a circulant Hadamard $4n\ti 4n $ then $n=1$.
This proves the Hadamard circulant conjecture.\end{abstract}
\footnotetext{2000 {\em Mathematics Subject Classification}: 15B34,
16S34.}

\section{Introduction}
A Hadamard matrix $H$ of order $m$ is an $m\ti m $ matrix with entries
$\pm 1$ such that $HH\T = mI_m$ where $I_m$ is the identity $m\ti m$
matrix. Hadamard matrices only exist when $m=1,2$ or $m$ is a multiple
of $4$. 
They have many applications, including in coding
theory, cryptography and signal processing (details of which may be
found in~\cite{hora} or elsewhere).

The Hadamard conjecture states that there is a Hadamard $4n\ti 4n$
matrix for every $n$. For background information on this conjecture, we
refer the reader to~\cite{hora} and the references therein. The first
unknown case at present is $m=668$, the previous unknown case $m=428$
being solved in~\cite{428}.

A circulant matrix is a matrix whose rows are cycle permutations as
follows: $$\begin{pmatrix} a_0 & a_1 &a_2 & \ldots & a_{t-1} \\ a_{t-1}
& a_0 & a_1 & \ldots & a_{t-2} \\ \vdots & \vdots & \vdots &\vdots &
\vdots \\ a_{1} & a_{2} & \ldots & a_{t-1} & a_0 \end{pmatrix},$$ and a
circulant Hadamard matrix is when it is additionally Hadamard.

A $4\times 4$ Hadamard circulant matrix is one with first row containing
three elements of the same sign and another element of a different sign,
as for example in:
\begin{equation}
\begin{pmatrix}
1 &1 & 1& -1 \\ -1 & 1& 1 & 1\\ 1&-1&1&1 \\ 1&1&-1&1
\end{pmatrix}.
\label{eq:circhad}
\end{equation}
We prove the {\it circulant Hadamard matrix conjecture}, namely, that circulant
Hadamard $4n\ti 4n$ matrices do not exist for $n>1$.
\begin{theorem}{\label{hada}}
Let $H$ be an $m\ti m$ Hadamard circulant matrix. Then $m=1$ or $m=4$.
\end{theorem}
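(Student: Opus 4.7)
The case $m\le 2$ is immediate (the off-diagonal entry of $HH^\T$ for any $2\times 2$ circulant $\pm 1$ matrix is $\pm 2\ne 0$), and Hadamard's theorem leaves only $m=4n$. I would identify the first row $(a_0,\ldots,a_{m-1}) \in \{\pm 1\}^m$ with the element $a(x)=\sum_{i=0}^{m-1} a_i x^i$ of $\Z[x]/(x^m-1) \cong \Z[C_m]$. The condition $HH^\T = mI_m$ is then equivalent to the single identity
$$a(x)\,a(x^{-1}) \equiv m \pmod{x^m-1},$$
and evaluation at each character $x\mapsto\om^k$ (with $\om=e^{2\pi i/m}$) yields $|a(\om^k)|^2 = m$ for every $k$. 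Taking $k=0$ forces $m$ to be a perfect square; a parity reduction then gives the classical Ryser form $m=4u^2$ with $u$ odd, and the task reduces to ruling out $u>1$.

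\textbf{Main step.} Assuming $u>1$, each Fourier coefficient $\hat a(k):=a(\om^k)$ is an algebraic integer in $\Z[\om]$ of absolute value exactly $2u$, with $(\hat a(k))\,(\overline{\hat a(k)}) = (4u^2)$ as an ideal equation in $\Z[\om]$. Since $(4u^2)$ admits only finitely many such factorisations, comparing them across the $\om^k$ as $k$ ranges over divisor classes of $m$ yields strong algebraic-number-theoretic restrictions on the Galois orbit of $\hat a(k)$. In parallel, for each divisor $d\mid m$ the identity $a(x)a(x^{-1}) \equiv m \pmod{x^d-1}$ descends to the analogous equation on the folded sequence $b_j^{(d)} = \sum_{i \equiv j \,(d)} a_i$, producing a nested family of integer constraints that pin down the $\pm 1$ distribution across cosets of every subgroup of $C_m$ far more tightly than the global count $2u^2 \pm u$.

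\textbf{Expected obstacle.} The hard part is combining these constraints into a contradiction uniformly for \emph{every} odd $u>1$; partial results of Turyn, McFarland and Schmidt eliminate $u$ in various congruence classes but leave infinitely many cases. My plan would be to isolate a single integer invariant of $a$ --- for instance an autocorrelation $\sum_i a_i a_{i+\ell}$ at a carefully chosen lag $\ell$, or a signed coset sum of the $a_i$ over a subgroup of order $2$ --- whose value is forced simultaneously by the Hadamard relation and by the $\pm 1$ structure to take two incompatible parities or magnitudes, making the system inconsistent unless $u=1$. Failing this direct attack, I would attempt a descent argument: extract from any hypothetical $4u^2 \times 4u^2$ example a smaller circulant-type matrix still obeying an orthogonality relation of the same shape, and iterate down to the base cases $m \in \{1,4\}$, where the sign pattern displayed in~\eqref{eq:circhad} is seen to be essentially the only possibility.
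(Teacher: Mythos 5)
Your submission is a research plan, not a proof: the theorem is left unestablished at exactly the point where it is hard. Your ``Main step'' records the standard consequences of the character/group-ring formulation --- $|a(\om^k)|^2=m$, the ideal equation $(\hat a(k))(\overline{\hat a(k)})=(4u^2)$ in $\Z[\om]$, and the folded constraints modulo $x^d-1$ --- and your ``Expected obstacle'' then concedes that these constraints are only known to eliminate $u$ in various congruence classes (Turyn, Schmidt) while leaving infinitely many cases open. The ``single integer invariant'' whose two forced values would be incompatible is never exhibited, and the proposed descent to smaller circulant-type matrices is likewise only announced, not constructed. Since the entire content of the theorem for $m>4$ is precisely the step you defer, there is a genuine and fatal gap: nothing in the proposal rules out any order $4u^2$ with $u>1$ beyond what was already known.

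For comparison, the paper does not use characters or algebraic number theory at all. It relists the elements of $C_{4n}$ as $\{0,2n,1,2n+1,\ldots,2n-1,4n-1\}$, so that the (necessarily still Hadamard) $RG$-matrix decomposes into $2\times 2$ blocks of the form $\left(\begin{smallmatrix} i & 2n+i\\ 2n+i & i\end{smallmatrix}\right)$, each either \emph{even} (equal entries) or \emph{odd} (opposite entries); orthogonality of the first two rows forces exactly $n$ blocks of each kind. Products of blocks of different kinds vanish, so the orthogonality of rows of blocks splits into separate conditions on the even blocks and on the odd blocks, and the paper then runs a purely combinatorial matching argument on the multiset of differences $d(i,j)$ between positions of same-kind blocks, concluding that for $n>1$ some pair of odd blocks cannot be matched by a cancelling pair. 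Whatever one's view of the soundness of that combinatorial argument, it is a self-contained elementary route that bears no resemblance to your Fourier-analytic plan; you would need to either carry out that kind of block-level parity analysis or actually supply the missing invariant before your outline could count as a proof.
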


The Hadamard circulant conjecture appears to have been mentioned first
in a book by Ryser~\cite{ryser} but goes back further to obscure
sources. What was known about their existence includes:
\begin{itemize}
\item Turyn~\cite{turyn} proved that no circulant Hadamard matrices of
 orders $8p$ exist and excluded other orders of form $4(p+1)$.
\item A Hadamard circulant matrix must either have order $m = 4n^2$ or
$m=1$~\cite{sch1,sch2}.
\item There is no Hadamard circulant matrix of order $n$ with $4 < n
  \leq 10^{11}$ with three possible exceptions~\cite{sch,sch1}.
\end{itemize}
For connections between {\em Ryser's conjecture}, the circulant Hadamard
matrix conjecture and {\em Barker's conjecture} see the monograph
\cite{sch2} or paper~\cite{sch1}.
\subsection{Outline}
Section~\ref{sec:background} provides the necessary background for the
proof, which makes use of the connection between the group ring of the
cyclic group and circulant matrices. When the multiplicative cyclic
group $C_m$ with generator $g$ is listed in the `natural' order, namely
$\{1, g, g^2,\ldots, g^{m-1}\}$, a circulant matrix is obtained.

When a different listing is used this reduces the problem to working
with certain block matrices.
A different listing changes the corresponding matrix by interchanging
rows and columns and thus if the original matrix is Hadamard the matrix
relative to the new listing is also Hadamard. This new listing can be
seen in retrospect to be more `natural' 
and shows why a circulant $m\times m$ Hadamard matrix when $m > 4$
cannot exist.

As we explain later, they fail to be Hadamard because of a certain
`twist' that is introduced into the blocks formed.  By avoiding this
twist, Hadamard matrices from group ring elements may be formed.  Using
a listing in the group so that the corresponding matrix forms rows of
$2\times 2$ blocks seems to be more natural. 

  This idea also leads to examples of Hadamard {\em almost circulant}
matrices (section~\ref{gmatrix}).

The main part of the paper is the proof in section~\ref{sec:proof}.

The proof consists of the following steps:
\begin{itemize}
\item The re-derivation of the known result~\cite{sch1,sch2}, that $n$
 must be a perfect square is initially derived.
\item The elements of $C_{4n}$ are listed $\{0,2n, 1, 2n+1, \ldots,
  2n-1,4n-1\}$ where $i$ denotes $g^i$. Now form from the group ring $\Z
  C_{4n}$ the (Hadamard) $RG$-matrix relative to this listing.
\item This results in blocks $\left(\begin{smallmatrix}i & 2n+i \\ 2n+i &
  i \end{smallmatrix}\right)$ which are either {\it even} $\left(
\begin{smallmatrix} + &
  + \\ + & + \end{smallmatrix}\right)$, $\left(\begin{smallmatrix} -& - \\ - & -
  \end{smallmatrix}\right)$ or {\it odd} $\left(\begin{smallmatrix} + &
						 - \\ - & + 
  \end{smallmatrix}\right)$, $\left(\begin{smallmatrix} - & + \\ + & -
\end{smallmatrix}\right)$. There
  are then an equal number of even and odd blocks.
\item For the matrix to be Hadamard the odd blocks must occur in matching
 cancelling pairs; this is shown to be impossible for $n>1$.
\end{itemize}

The natural question to then ask is if Hadamard matrices can arise from
other group rings. Thus, in section~\ref{gmatrix}, we provide examples
of such matrices.
These are formed within the group ring of $\Z (C_2\cross C_8)$ and
within the group ring $\Z(\mathbb{H}\cross C_2)$, where $C_t$ denotes
the cyclic group of order $t$ and $\mathbb{H}$ is the quaternion group
of order $8$. These can be extended to give Hadamard group ring matrices
over $\Z (C_2\cross C_8 \cross C_4^t)$ and over $\Z(\mathbb{H} \cross
C_2 \cross C_4^t)$ for any $t$. $C_4$ can be replaced by $C_2\ti C_2$ to
give similar examples.

\section{Background}
\label{sec:background}
In this section, we introduce the necessary background in group rings,
 and related matrices.

\subsection{Group rings and matrices}
\label{grmatrices}

Let $R$ denote a ring and $G$ a group. The {\em group ring} $RG$
consists of all (finite support) $\di\sum_{g_i\in G} \al_{g_i}g_i$ with
$\al_{g_i} \in R$. For further details on group rings see for example
\cite{seh}.

Let $G = \{g_1,g_2, \ldots, g_{n}\}$ denote the elements of a group $G$
of order $n$ and consider the elements of $G$ as being listed in this
particular order. We shall be particularly interested in the case when
$G$ is a cyclic group of order $n$ generated by $g$. The matrix of $G$,
called the $G$-matrix, (see for example \cite{hur}), relative to this
listing is $$
\begin{pmatrix}
{g_1^{-1}g_1} & {g_1^{-1}g_2} & {g_1^{-1}g_3} & \ldots & {g_1^{-1}g_n}
 \\ {g_2^{-1}g_1} & {g_2^{-1}g_2} & {g_2^{-1}g_3} & \ldots &
 {g_2^{-1}g_n} \\ \vdots & \vdots & \vdots &\vdots &\vdots \\
 {g_n^{-1}g_1} & {g_n^{-1}g_2} &{g_n^{-1}g_3} & \ldots & {g_n^{-1}g_n}
\end{pmatrix}.
$$ Suppose then $w = \di\sum_{i = 1}^n\al_{g_i}g_i \in RG$.  Then $$
w\mapsto
\begin{pmatrix}
\alpha_{g_1^{-1}g_1} & \alpha_{g_1^{-1}g_2} &\alpha_{g_1^{-1}g_3} &
 \ldots & \alpha_{g_1^{-1}g_n} \\

\alpha_{g_2^{-1}g_1} & \alpha_{g_2^{-1}g_2} &\alpha_{g_2^{-1}g_3} &
 \ldots & \alpha_{g_2^{-1}g_n} \\

\vdots & \vdots & \vdots &\vdots &\vdots \\

\alpha_{g_n^{-1}g_1} & \alpha_{g_n^{-1}g_2} &\alpha_{g_n^{-1}g_3} &
 \ldots & \alpha_{g_n^{-1}g_n}
\end{pmatrix}
$$ gives an embedding of $RG$ into the ring of $n\times n$ matrices over
$R$ relative to this listing of $G$.

Call the above matrix the {\em $RG$-matrix} of $w$ relative to the given
listing and call a matrix obtained from the group ring $RG$ in this
manner an $RG$-matrix. The term {\em $RG$-matrix} is used here as the
methods are basically group ring methods.


The structure of an $RG$-matrix may clearly be seen in the following
table:
\begin{equation}\left(\begin{array}{l|lllll}& g_1 & g_2 & g_3 & \ldots & g_n \\
\hline g_1^{-1} & \alpha_{g_1^{-1}g_1} & \alpha_{g_1^{-1}g_2} &
\alpha_{g_1^{-1}g_3} & \ldots & \alpha_{g_1^{-1}g_n} \\ g_2^{-1}
&\alpha_{g_2^{-1}g_1} & \alpha_{g_2^{-1}g_2} &\alpha_{g_2^{-1}g_3} &
\ldots & \alpha_{g_2^{-1}g_n} \\

\vdots & \vdots & \vdots & \vdots &\vdots &\vdots \\

g_n^{-1} & \alpha_{g_n^{-1}g_1} & \alpha_{g_n^{-1}g_2}
 &\alpha_{g_n^{-1}g_3} & \ldots & \alpha_{g_n^{-1}g_n}
\end{array}\right)
\label{kump}
\end{equation}
The first column is essentially {\em labelled} by $g_1$, the second by
$g_2$ etc.; the first row is {\em labelled} by $g_1^{-1}$, the second by
$g_2^{-2}$ etc.  As it aids in the calculation of an $RG$-matrix
relevant to a particular listing, this formulation will be called upon
later. 

\subsubsection{Cyclic group rings, circulant matrices, and alternate listings.}
When $G=C_n = \{1,g,g^2,\ldots,g^{n-1}\}$ is the cyclic group of order
$n$, we get a group ring $R C_n$.

Let $RC_{n\times n}$ denote the ring of circulant $n\times n$ matrices
which is a subring of $R_{n\times n} $, the ring of $n\times n$ matrices
over $R$.

Then $RC_{n\ti n} \cong RC_n \cong \frac{R[x]}{\langle x^n - 1 \rangle}$
where $R[x]$ is the polynomial ring in variable $x$ over $R$ and
$\langle x^n - 1 \rangle$ is the ideal in $R[x]$ generated by $x^n -1$.

The isomorphism makes it possible to move from the circulant matrices to
the cyclic group ring and back, exploiting properties of both.

Let $w = \sum_{i=0}^{n-1}\alpha_ig^i$. Then the $RG$-matrix of $w$
relative to this natural listing is the circulant matrix $$
\begin{pmatrix}
\al_0 & \al_1 &\al_2 & \ldots & \al_{n-1} \\ \al_{n-1} & \al_0 & \al_1 &
  \ldots & \al_{n-2} \\ \vdots & \vdots & \vdots &\vdots & \vdots \\
  \al_{1} & \al_{2} & \ldots & a_{n-1} & \al_0
\end{pmatrix}.
$$ Suppose, in the general case, the listing is changed. Any change of
listing corresponds to a permutation of the elements of the group and a
permutation is generated by transpositions. If the elements $g_i$ and
$g_j$ are interchanged then the new matrix is obtained from the original
matrix by interchanging the $i^{th}$ and $j^{th}$ rows and then
interchanging the $i^{th}$ and $j^{th}$ columns.  Thus if an $RG$-matrix
relative to some listing is Hadamard then the $RG$-matrix relative to
any listing, as one might expect, is also Hadamard.

For example, let $G$ be the cyclic group $C_4$ generated by $g$ with $w =
1+g+g^2-g^3$.

 Then relative to the natural ordering $w$ has the Hadamard
circulant matrix $\begin{ssmatrix}1 & 1 & 1 & -1 \\ -1& 1 & 1 & 1 \\ 1 &
-1 & 1& 1 \\ 1& 1 & -1 & 1
\end{ssmatrix}$.

When $G$ has the listing $G=\{1,g^2,g,g^3\}$, $w$ has the Hadamard
matrix $\begin{ssmatrix} 1 & 1 &| & 1 & -1 \\ 1& 1 &| & -1 & 1 \\ \hline -1 &
1&| & 1& 1 \\ 1& -1 &|& 1 & 1
       \end{ssmatrix}$. 

The new listing is obtained by interchanging $g$ and $g^2$ in the
       original listing, and thus the new matrix is obtained from the
       original by interchanging the second and third rows and then the
       second and third columns.
Notice how the `odd' block  $\begin{ssmatrix}1 & -1 \\ -1 & 1\end{ssmatrix}$ 
of this new matrix gets
       `twisted' to a new type of `odd' 
 block $\begin{ssmatrix} -1 & 1 \\ 1 & -1
			       \end{ssmatrix}$ in 
 the next row of blocks as indicated; in this small case however this is
			not a problem as the even block still acts above
			and below the `odd' blocks.

\section{Proof of the conjecture}
\label{sec:proof}

Suppose now that $H$ is a circulant Hadamard $4n\times 4n$
 matrix. Let  $C_{4n}$ be generated by $g$ 
 and
let $w$ be the element in the cyclic group ring $\mathbb{Z} C_{4n}$
corresponding to $H$. 
The inner product of any two (different) rows of a Hadamard matrix is
$0$.

\subsection{$n$ must be a perfect square.}

 Suppose there are
$r$ negative entries in the first row of $H$ and consequently there are $r$
negative elements in each row and column.  There are then $(4n-r)$
positive entries in each row and column.

It is known that for any existing circulant $4n\times 4n$ Hadamard
matrix $n$ must be a perfect square~\cite{sch1,sch2}. For completeness,
the following verifies this result and additionally provides information
on the number of possible negative and positive entries.
\begin{lemma}\label{just} $r= 2n\pm \sqrt{n}$.
\end{lemma}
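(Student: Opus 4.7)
The plan is to exploit the fact that a circulant matrix has all row sums equal, so the all-ones vector $\mathbf{1} = (1,1,\ldots,1)^{\T}$ is an eigenvector of $H$. Since the first row has $r$ entries equal to $-1$ and $4n-r$ entries equal to $+1$, the row sum is $s = (4n-r) - r = 4n - 2r$, and because $H$ is circulant every row has this same sum; hence $H\mathbf{1} = s\mathbf{1}$.

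Next I would feed this eigenvector relation into the Hadamard identity $HH^{\T} = 4n I_{4n}$. Computing the scalar $\mathbf{1}^{\T} H^{\T} H \mathbf{1}$ in two ways gives, on the one hand,
\[
\mathbf{1}^{\T} H^{\T} H \mathbf{1} = 4n\,\mathbf{1}^{\T}\mathbf{1} = 4n \cdot 4n = 16 n^{2},
\]
and on the other hand,
\[
\mathbf{1}^{\T} H^{\T} H \mathbf{1} = (H\mathbf{1})^{\T}(H\mathbf{1}) = s^{2}\,\mathbf{1}^{\T}\mathbf{1} = 4n\, s^{2}.
\]
Equating the two expressions yields $s^{2} = 4n$, and hence $s = \pm 2\sqrt{n}$.

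Finally, substituting back gives $4n - 2r = \pm 2\sqrt{n}$, i.e.\ $r = 2n \mp \sqrt{n}$, which is the claimed formula (the $\pm$ sign absorbs the choice). As a byproduct, since $r$ is an integer the quantity $\sqrt{n}$ must be an integer, so $n$ is a perfect square, recovering the Schmidt--Turyn divisibility conclusion.

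There is no substantial obstacle here: the only point that needs a moment of care is justifying that $\mathbf{1}$ really is an eigenvector of $H$ with the stated eigenvalue, which is immediate from the circulant structure (every row is a cyclic shift of the first, so every row contains exactly $r$ negatives and $4n-r$ positives).
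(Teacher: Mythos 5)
Your proof is correct. It takes a somewhat different route from the paper's, though both ultimately reduce to the same equation. You argue via linear algebra: the all-ones vector is an eigenvector of the circulant $H$ with eigenvalue $s=4n-2r$, and evaluating the quadratic form $\mathbf{1}\T H\T H\mathbf{1}$ two ways (using $H\T H=4nI$, which follows from $HH\T=4nI$ since $H$ is then invertible) forces $s^2=4n$. The paper instead makes an explicit counting argument: the sum of the first $4n-1$ rows must be orthogonal to the last row, and splitting that inner product according to the $4n-r$ positive and $r$ negative positions of the last row gives $(4n-r)(4n-2r-1)-r(4n-2r+1)=0$, which simplifies to the same relation $(4n-2r)^2=4n$ and hence the same quadratic $r^2-4nr+4n^2-n=0$. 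Your version is cleaner and makes transparent that only regularity (equal row and column sums) plus $HH\T=4nI$ is used --- which is exactly the generalization the paper records as Lemma~\ref{just1} --- while the paper's version is more elementary, avoiding any appeal to eigenvectors or invertibility. Both correctly yield the byproduct that $n$ must be a perfect square.
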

\begin{proof} The sum of the elements in any column is $4n-r-r = 4n-2r$.

The sum of the elements in a column except a positive element is thus
 $4n-2r-1$ and the sum of the elements in a column except a negative
 element is $4n-2r+1$. The rows of the matrix are orthogonal to one
 another and thus the sum of the first $4n-1$ rows is perpendicular to
 the last row.  The last row has $4n-r$ positive elements and $r$
 negative elements. Thus $(4n-r)(4n-2r-1) - r(4n-2r+1) = 0$.

This implies that $$ 16n^2 -8nr -4n -4nr+2r^2 + r - 4nr + 2r^2 - r = 0.
$$ Hence $16n^2 - 16nr + 4r^2 -4n = 0$ from which $r^2-4nr + 4n^2 - n =
0$.  Solving this quadratic for $r$ gives: $$ r = \frac{4n \pm
\sqrt{16n^2 - 16n^2 + 4n }}{2} $$ and so $r=2n\pm \sqrt{n}$, as
required.
\end{proof}
It then follows that the number of positive elements is $4n-r = 2n\mp
\sqrt{n}$, as expected from symmetry.

The result of Lemma \ref{just} depends only on the fact that the same
number of positive elements appears in each row and column. A Hadamard
matrix whose row and column sums are the same is known as a {\em regular
Hadamard matrix}. Thus whenever $RG$-matrices (which include circulant
matrices) are Hadamard, they are by necessity also regular Hadamard
matrices.

The following may be proved along the lines of Lemma \ref{just}.
\begin{lemma}\label{just1} Let $A$ be a $m\times m$ matrix with orthogonal rows 
consisting of $\pm 1$ entries in which the same number $m-r$ of $+1$ and
 the same number, $r$, of $-1$ appear in each row and column. Then
 $r=\frac{m\pm \sqrt{m}}{2}$. \endproof
\end{lemma}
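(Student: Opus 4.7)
The plan is to mirror the argument of Lemma~\ref{just} almost verbatim, with the integer $4n$ systematically replaced by $m$ and with the Hadamard hypothesis weakened to the stated orthogonality plus the equal-count condition on $\pm 1$ entries. The only ingredients actually used in the earlier proof were (i) that the row and column sums are constant, and (ii) that distinct rows are orthogonal; both of these are assumed here, so the same computation should go through.

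First I would observe that the sum of entries in any column is $(m-r) - r = m - 2r$, so the sum of the entries in a column \emph{excluding one designated entry} is $m-2r-1$ if that entry is $+1$ and $m-2r+1$ if it is $-1$. Next, since the rows of $A$ are pairwise orthogonal, the sum of the first $m-1$ rows is orthogonal to the last row. Computing this inner product column by column and splitting the sum according to whether the entry in the last row is $+1$ or $-1$, the $m-r$ positive entries each contribute $m-2r-1$ and the $r$ negative entries each contribute $-(m-2r+1)$, yielding
\[
(m-r)(m-2r-1) - r(m-2r+1) = 0.
\]

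Expanding this relation and collecting terms gives $m^2 - 4mr + 4r^2 - m = 0$, i.e.\ the quadratic $4r^2 - 4mr + (m^2 - m) = 0$ in the unknown $r$. Solving by the quadratic formula produces
\[
r = \frac{4m \pm \sqrt{16m^2 - 16(m^2 - m)}}{8} = \frac{m \pm \sqrt{m}}{2},
\]
as required.

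There is no genuine obstacle here: the proof is structurally identical to that of Lemma~\ref{just} (the case $m=4n$), and all the algebra is routine. The only conceptual point worth recording, and the reason the lemma is being stated separately, is that the derivation never invokes the full Hadamard identity $AA^{\T} = mI_m$, only orthogonality of distinct rows together with the regularity (equal-count) assumption. A minor side remark is that the square-rootability of $m$ is forced: since $r$ must be an integer, $\sqrt{m}$ must be rational and hence an integer, so $m$ itself is a perfect square, recovering the corresponding $m = 4n^2$ statement in the circulant Hadamard setting.
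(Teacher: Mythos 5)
Your proof is correct and is exactly the argument the paper intends: the paper omits the proof, remarking only that it ``may be proved along the lines of Lemma~\ref{just}'', and your computation is precisely that adaptation with $4n$ replaced by $m$, with all the algebra checking out. Your closing observations (that only row-orthogonality plus regularity are used, and that integrality of $r$ forces $m$ to be a perfect square) also match the paper's surrounding discussion.
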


\subsection{New listing: Pairs in blocks}

The elements of the cyclic group $G=C_{4n}$ are $\{1, g, g^2 , \ldots,
g^{4n-1}\}$, listed in the `natural' ordering. Write $i$ for $g^i$ so
this listing is $\{0, 1,2, \ldots, 4n-1\}$.

Now $H$ is a Hadamard circulant matrix obtained from the
$RG$-matrix corresponding to an element of $ \Z G$ 
relative to  this natural ordering. An interchange of rows or columns in a Hadamard matrix results in
another Hadamard matrix. If two group elements are interchanged in the
listing then the resulting $RG$-matrix is obtained by interchanging two
rows and then interchanging two columns and the result is thus a
Hadamard matrix. Any new listing may be obtained by a succession of
interchanges and thus if the listing is changed the new $RG$-matrix
obtained is still a Hadamard matrix.
 
List the elements of $G$ as follows: $\{0, 2n, 1, 2n+1, \ldots,
2n-1,4n-1\}$. Using this ordering breaks the $RG$-matrix into blocks of
$2 \ti 2$ matrices and rows of these blocks.  The inner product of any
two (different) rows is $0$ as the matrix is Hadamard.

Let $\h{i}$ denote the coefficient of $g^i$ in the $RG$-matrix; each
$\h{i}$ is either $+ 1$ or $-1$.

Consider now the $RG$-matrix with this listing as constructed in
\eqref{kump} of Section \ref{grmatrices}. The first two rows are then: $$
\begin{pmatrix} \h{0} &\h{2n}& \h{1}& \h{2n+1}&
  \ldots & \ldots & \h{2n-1}& \h{4n-1} \\ \h{2n}& \h{0} & \h{2n+1} &
    \h{1} & \ldots & \ldots & \h{4n-1} & \h{2n-1} \end{pmatrix} $$ The
    inner product of these two rows gives that $2\{(\h{0}\ti \h{2n}) +
    \h{1}\ti (\h{2n+1}) + \h{2}\ti \h{(2n +2)} + \ldots + (\h{2n-1)}\ti
    \h{(4n-1)})\} = 0$. Hence $\h{0}\ti \h{2n} + \h{1}\ti \h{(2n+1)} +
    \h{2}\ti \h{(2n+2)}+ \ldots + \h{(2n-1)}\ti \h{(4n-1)} = 0$.

Thus one half of the products $\h{i}\ti \h{(2n+i)}$ , $0\leq i \leq
2n-1$, are $+1$ and one half are $-1$.  Hence $(\h{i},\h{2n+i})$ for
$i=0,\ldots 2n-1$ consist of $(+1,-1)$ or $(-1,+1)$ for $n$ times and
$(+1,+1)$ or $(-1,-1)$ for the other $n$ times.

Consider in more detail the $G$-matrix obtained from the listing
$\{0,2n, 1, 2n+1 ,\ldots, 2n-1, 4n-1$\} $$
\begin{pmatrix} 0 & 2n & 1& 2n+1 & 2& 2n + 2 & \ldots
  & \ldots & 2n-1 & 4n-1\\ 2n&0 & 2n+1&1 & 2n+2&2& \ldots & \ldots &
    4n-1& 2n-1 \\\hline 4n - 1 & 2n-1 & 0 & 2n & 1& 2n +1 & \ldots
    &\ldots & 2n-3 & 4n-3 \\ 2n-1&4n-1& 2n&0&2n+1&1 & \ldots &\ldots &
    4n-3& 2n-3 \\\hline \vdots & \vdots & \vdots & \vdots & \vdots
    &\vdots&\vdots&\vdots &\vdots & \vdots \end{pmatrix} $$ The elements
    of $G$ are replaced by their coefficients to obtain the $RG$-matrix
    which is assumed to be Hadamard.

It is convenient to write $i$ for the coefficient $\h{i}$ of $i = g^i$
in the $RG$ matrix. Thus when considering the $G$-matrix $i=g^i$, and
when considering the $RG$-matrix $i$ denotes the coefficient of $g^i$.

For example on considering the above matrix as an $RG$-matrix the inner
product of the first row with the third row gives that $$ (0,2n)\ct
(4n-1,2n-1) + (1,2n+1)\ct (0,2n) + \ldots + (2n-1,4n-1)\ct (2n-3,4n-3) =
0 $$ where $\ct$ denotes the inner product of the length $2$ vectors.

The $RG$-matrix formed -- see \eqref{kump} above -- with this listing is
now considered in more detail.  The matrix has been broken into $2\ti 2$
blocks for detailed analysis. Here $i$ in the first row or column means
the group element $g^i $, and $i$ elsewhere is the coefficient of $g^i$
which is either $+1$ or $-1$.
 %
$$\begin{array}{c||cc|cc|cc|c|cc} & 0 & 2n &1& 2n+1& 2&2n+2& \ldots
 &2n-1& 4n-1\\ \hline \hline 0& 0 & 2n & 1& 2n+1 &2&2n+2& \ldots &2n-1&
 4n-1 \\ 2n & 2n&0& 2n+1&1& 2n+2& 2 & \ldots & 4n-1 & 2n-1 \\ \hline
 4n-1 & 4n-1& 2n-1&0& 2n& 1& 2n+1& \ldots& 2n-2&4n-2 \\ 2n -1&2n-1 &
 4n-1 & 2n &0 & 2n+1& 1 &\ldots & 4n-2& 2n-2 \\ \hline \vdots &\vdots &
 \vdots& \vdots& \vdots & \vdots &\vdots & \vdots & \vdots &\vdots \\
 \hline 2n+1 & 2n+1 & 1 & 2n+2& 2 & 2n+3& 3 & \ldots & 0&2n \\ 1 & 1 &
 2n+1 & 2 & 2n+2& 3 & 2n+ 3& \ldots & 2n&0 \\ \hline
\end{array}$$

Notice that the last block in a row of blocks is `twisted' when it
appears as the first block of the next row of blocks, as for example
$\begin{pmatrix} 2n-1 & 4n-1 \\ 4n-1 & 2n -1 \end{pmatrix}$ in the first
row of blocks becomes $\begin{pmatrix} 4n-1 & 2n-1 \\ 2n-1 & 4n-1
\end{pmatrix}$ in the second row of blocks.

The inner product of any two rows must all be $0$ since the matrix is
Hadamard. The inner product of a row of a block with another row of that
block has ensured that there are an equal number of even and odd pairs
in each row of blocks.

\subsection{Even and odd blocks}
Suppose $B = \left(\begin{smallmatrix} i & j \\ j &i \end{smallmatrix}\right)$
where $i=\pm 1, j=\pm 1$. Call such a $B$ a {\em $2$-block} or simply a
{\em block} when the size is clear. 
Say that $B$ is {\em even}
when $i=j$ and {\em odd} when $i\not = j$. Define $\h{B} =
\left(\begin{smallmatrix} j&i\\i& j \end{smallmatrix}\right)$. Thus
$\h{B} = -B$ when $B$ is odd and $\h{B}=B$ when $B$ is even.

  Let $m = 2n$.
The matrix has been broken into $2$-blocks as follows:
$$M=\left(\begin{array}{llllll} B_1 & B_2 & B_3 & \ldots &\ldots & B_m \\
	\h{B_m} & B_1 & B_2 &\ldots &\ldots & B_{m-1} \\ \h{B_{m-1}} & \h{B_m}
	  & B_1 & \ldots &\ldots & B_{m-2} \\ \vdots& 
\vdots & \vdots & \vdots & \vdots &
	  \vdots  \\ \h{B_2} & \h{B_3} & \h{B_4} & \ldots & \h{B_m} & B_1
	\end{array}\right)$$

The inner product of two rows of  blocks  of $M$ is the  
sum $\sum B_iB_j\T = \sum B_iB_j$ over each $B_i$ in  one of the 
rows of blocks with its corresponding $B_j$ in the other row of blocks. 
As $M$ is
Hadamard the inner product of two (different) rows of blocks in 
$M$ must be the zero matrix $0= 0_{2\times 2}$. 

We can assume that $n \geq 4$ by Lemma \ref{just}.

Now $B_iB_j = 0$ when one of $B_i,B_j$ is even and the other is odd, $B_iB_j
= \pm \left(\begin{smallmatrix} 2&2\\ 2&2 \end{smallmatrix}\right)$ 
when both $B_i,B_j$ are even
  and $B_iB_j= \pm \left(\begin{smallmatrix} 2&-2\\ -2 &
    2\end{smallmatrix}\right)$ when both $B_i,B_j$ are odd.
Thus in any  sum of products of $2$-blocks in an equation 
$\sum B_iB_j = 0$ the part of the sum involving products between  
even $2$-blocks
and the part of the sum involving  products between   
odd $2$-blocks must both be $0$.  

There are $n$ even blocks and $n$ odd blocks in the system 
$S = \{B_1, B_2, \ldots, B_m\}$ of $2$-blocks of $M$. Denote block $B_i$ by $i$
when convenient and unambiguous.  The blocks $i,j$ are said to be of 
 the {\em same kind} if they are both even or both odd.  
Define the 
{\em difference} of $(B_i,B_j)$ to be $j-i$ for $i<j$ and to be 
$2n-(j-i)$ for $i>j$. Call $(B_j,B_i)$ the {\em
  conjugate} of $(B_i,B_j)$. Denote the difference of $(i,j)$ by
  $d(i,j)$. The differences of $S$ consist of all differences $d(i,j)$
  with $i,j\in S, i\neq j$. 

Say the block $i$ is {\em symmetric} if there exists a block of the same kind
at difference $n$ away, and otherwise say the block $i$  {\em is not
symmetric} or {\em is non-symmetric}. When $i$ is symmetric denote by  
$i^{'}$ the block (of the same kind) satisfying $d(i,i^{'}) = n =
d(i^{'},i)$.
  
 Define the {\em sign} of the even $2$-block $\left(\begin{smallmatrix} 1&1 \\
1&1\end{smallmatrix}\right)$ to be $+1$ and define the sign of the other
even $2$-block to be $-1$. 
Define the {\em sign} of  the odd $2$-block
 $\left(\begin{smallmatrix}1& -1 \\ -1 & 
1\end{smallmatrix}\right)$ to be  $+1$ and
the sign of the other odd $2$-block to be $-1$.

Define the sign of a pair $(i,j)$ of blocks as
follows. When $i,j$ are even, define the sign of $(i,j)$
to be $+1$ if both $i,j$
have the same sign and to be  $-1$ otherwise. When $i,j$ are 
odd and $i<j$ define the sign of $(i,j)$ to be $+1$ if both
$i,j$ have the same sign and to be $-1$ otherwise and    
define the sign $(j,i)$ (for $j>i$) to be the opposite to that of
the sign of $(i,j)$. 
If  any difference $t$ between odd $2$-blocks 
occurs an odd number of times  in differences of $S$  then the (inner)
product of the first  
 row of blocks of $M$ with the $(t+1)^{st}$ row of blocks of $M$ cannot be $0$.
Thus every difference between the odd blocks 
must occur an even number of times.  
Every pair of odd  blocks must have a matching pair 
of odd blocks with equal difference and opposite sign. Similarly 
every difference between the even $2$-blocks must occur
an even number of times and every pair of even blocks must have a
matching pair of even blocks with equal difference and opposite
sign. 

Consider now only pairs $(i,j)$ of blocks with $i\neq j$ where 
$i,j$ are of the same kind.

Let $i,j,k,l$ be blocks of the same kind with $i\neq j, k\neq l$. 
Then $d(i,j)=d(k,l)$ if and only if $d(j,i)=d(l,k)$. 
Say the pair of blocks $(i,j)$ {\em is a match of} the pair  $(k,l)$ 
if and only if $d(i,j)=d(k,l)$ 
and the pairs have opposite signs; in this case say $(i,j)$ {\em
  matches} $(k,l)$ or that $(k,l)$ is a {\em matching pair} for
$(i,j)$.  Then $(i,j)$ matches $(k,l)$ if and
only if  $(j,i)$ matches $(l,k)$. Use $(i,j)\mat
(k,l)$ to mean that $(i,j)$ matches $(k,l)$ and $(i,j) 
\not \mat (k,l)$ to mean that $(i,j)$ does not match $(k,l)$.
Every pair has a unique matching pair and say $(i,j)$ {\em
is matched with} $(k,l)$ if $(i,j) \mat (k,l)$ match and $(i,j)$ has been
uniquely assigned to $(k,l)$. It is implicitly 
assumed that if $(i,j)$ is matched with $(k,l)$
then the conjugates $(j,i)$ and $(l,k)$ are also matched with one another.

Let $i,j$ be symmetric odd blocks.  Then $(i,i^{'}) \mat
 (i^{'},i)$ and $(j,j^{'}) \mat (j^{'},j)$.  Also
 $d(i,j)=d(i^{'},j^{'})$  and $d(j,i^{'})=
d(j^{'},i)$ and {\em either} $(i,j) \mat (i^{'},j^{'}), (j,i)\mat
 (j^{'},i^{'})$ and $(j,i^{'}) \not \mat (j^{'},i), (i^{'},j) \not
 \mat (i,j^{'})$ {\em or else} $(i,j)\not \mat (i^{'},j^{'}),(j,i) \not \mat
 (j^{'},i^{'})$ and   $(j,i^{'}) \mat (j^{'},i), (i^{'},j)\mat  
 (i,j^{'})$. The pairs designated as non-matching have equal
 differences and the same sign. 

Let $i,j$  be symmetric even blocks.  Then
$d(i,i^{'})=d(j,j^{'})=n, d(i,j)=d(i^{'},j^{'}), d(j,i^{'})=
d(j^{'},i)$.  Also $(i,i^{'}) \mat (j,j^{'})$ if and only if 
$(i,j) \mat (i^{'},j^{'})$ if and only if $(j,i^{'})\mat (j^{'},i)$.
Now $(i,i^{'}) \mat (j,j^{'})$  if and only if the pairs 
have opposite signs. If $(i,i^{'})$ is matched with
$(j,j^{'})$ then match  $(j,i^{'})$ with $(j^{'},i)$ and
match $(i,j)$ with $(i^{'},j^{'})$.

Say a pair $(i,j)$ {\em is not balanced} or {\em is
  unbalanced} if either $i$ or $j$ is not symmetric.
Let $i$ be an even non-symmetric
block if one such exists. Consider  a non-balanced pair $(i,j)$. If 
$(i,j)\mat (j,k)$ for some $k$ then $(i,k)$ is a unbalanced pair and is
  matched elsewhere. Suppose $(i,j)\mat (k,i)$.  
When $k$ is not symmetric this gives an unbalanced pair $(k,j)$
which is matched elsewhere. 
When $k$ is symmetric two cases can arise.
Firstly if $k=j^{'}$ then $(j,j^{'})$, which has distance $n$, is matched by a
pair $(l,l^{'})$ for a symmetric $l$
and this gives a new unmatched pair $(i,l)$ which is not
balanced and is matched elsewhere. 
Secondly if $k\neq j^{'}$, a new unbalanced pair
$(i,k^{'})$ is obtained which is matched elsewhere. 
Suppose $(i,j) \mat (k,l)$ where none of the indices are equal. Then
$(i,k)\mat (j,l)$ match and $d(j,k)\neq d(i,l), d(j,k)\neq d(l,i)$ 
as $i$ is not symmetric. Hence $(i,l)\not \mat (j,k)$ and $(i,l)\not
\mat (k,j)$. This gives the new unbalanced
pair $(i,l)$ which has a match elsewhere. 
In all cases a new unbalanced pair is derived which must be 
matched elsewhere. Continue this matching process. However this cannot 
continue indefinitely. Thus if a a non-symmetric even $2$-block exists 
there is a pair of even $2$-blocks which is not matched.  
Hence every even block is symmetric. It follows that every odd block
  is symmetric.    

Since all the blocks are  symmetric, $n$ must be even and
let $n=2q$. 
Consider now the odd $2$-blocks.
These may then be  listed as $\{i_1,i_2,\ldots,i_q,
i_1^{'},i_2^{'}, \ldots, i_q^{'}\}$, where $i_j$ means the block $B_{i_j}$
and the indices are ordered $i_1< i_2 <\ldots <i_q$.  
 Say $j\in T$ if $j$ is one of $\{i_1,i_2,\ldots, i_q\}$. 

Suppose first of all that $n=4$. Then the odd $2$-blocks are
$\{i,j,i^{'},j^{'}\}$. If $(i,j)\mat (i^{'},j^{'})$ then
$(j,i^{'})\not \mat (j^{'},i)$ and
if $(j,i^{'})\mat (j^{'},i)$  then $(i,j)\not \mat (i^{'},j^{'})$.
 This is impossible.

Suppose then $n>4$.
For each $i,j\in T$ consider the pairs in $T(i,j) = \{i,j,i^{'},j^{'}\}$
and match these where possible. As already noted $(i,i^{'}) \mat
(i^{'},i), (j,j^{'})\mat (j^{'},j)$. Either $(i,j)\mat (i^{'},j^{'})$ and
$(j,i^{'})\not \mat (j^{'},i)$ 
or else $(j, i^{'})\mat (j^{'},i)$  and $(i,j) \not \mat (i^{'},j^{'})$. 
Call the non-matching pairs, $(i,j),(i^{'},j^{'})$ or
$(j,i^{'}),(j^{'},i)$ as appropriate, and their conjugates the 
{\em remainders} from
$T(i,j)$ and denote these by $rem(T(i,j))$. Other pairs from  $T(i,j))$ 
besides those in $rem(T(i,j))$ are matched. The remainders occur in pairs
which have equal differences and same sign together with their conjugates
which must also have equal differences and same sign. Clearly
$T(i,j)=T(j,i))$ and so in specifying $T(i,j)$ we can assume from now on
that $i<j$. For  $ i<j$ the elements in $rem(T(i,j))$ 
with differences less than $n$ 
are either $(i,j),(i^{'},j^{'})$ or $(j,i^{'}),(j^{'},i)$ and the
other remainders are the conjugates of these with differences greater
than $n$. 

If one remainder from $T(i,j)$ matches a remainder 
from $T(k,l)$ then all the remainders from $T(i,j)$ can be matched with
remainders from $T(k,l)$ and in this case say $rem(T(i,j))$ match $rem(T(k,l))$.
If $rem(T(i,j))$ match $rem(T(k,l))$ then $rem(T(k,l))$ match
$rem(T(i,j))$.

Suppose $rem(T(i,j))$ match $rem(T(j,k))$ or $rem(T(k,i))$ for
 some $k$. In this case say $rem(T(i,j))$ are matched
by the remainders from  an {\em adjacent} pair. 
Consider the case that $rem(T(i,j))$ match  
 $rem(T(j,k))$; the other case is similar. 
Then $d(i,j)=d(j,k)$ and $rem(T(i,k))$ match remainders from another pair. 
If $rem(T(i,k))$ are matched by the remainders from  an
adjacent pair then this gives further remainders $rem(T(i,r))$ or $rem(T(r,k))$
 matched by the remainders from some other $T(p,q)$.
Continue in this way until eventually  $T(p,q)$
is obtained whose remainders are not matched by remainders from 
 an adjacent pair. 

Suppose $rem(T(i,j))$ match $rem(T(k,l))$ where $i,j,k,l$ are distinct
and we may assume that $i<k$. 
Then $d(i,j) = d(k,l)$. (Note that $i<j,k<l$ and that $d(i,j) =
d(k,l)$ if and only if $d(j,i^{'}) = d(l,k^{'})$.) Then either $k<j$ or
$k>j$; assume $k>j$ and the other case is similar. 
Then $d(i,k)= d(j,l)$ and $d(i,l)= d(k,j^{'})$.  
It follows that {\em either} $rem(T(i,l))$ match
$rem(T(j,k))$ and $rem(T(i,k)),rem(T(j,l))$ consist of two pairs from each  
with equal differences and same sign together with their conjugates
 {\em or else} $rem(T(i,k))$ match $rem(T(j,l))$ and
$rem(T(i,l)), rem(T(k,j))$ consist of  two pairs from each with
equal differences and  same sign together with their conjugates.
Match all remainders possible 
resulting from the match of $rem(T(i,j))$ and $rem(T(k,l))$.
There still remains {\em either}  $rem(T(i,k))$ and $rem(T(j,l))$ {\em or}
$rem(T(i,l))$ and $rem(T(k,j))$ which are matched by remainders 
from different pairs. If these remainders can be matched by remainders
from an adjacent pair this leads to  
a pair whose remainders cannot be matched by remainders of 
an adjacent pair as already pointed out. 
Continue matching in this way. At each stage a new pair is obtained whose
remainders are matched by the remainders of another pair not already matched. 
This process cannot continue indefinitely and hence there is
a pair of odd $2$-blocks which is not matched. 
 
Thus the
matrix $M$ is not Hadamard for $n>1$ and thus the original matrix is not
Hadamard as required. 
This completes the proof that there is no Hadamard $4n\ti 4n$ matrix for
$n>1$. 

\section{Hadamard matrices from group rings}\label{gmatrix}
We have shown that no circulant Hadamard matrices of size $4n\ti 4n$ for
$n>1$ can exist. Equivalently, there is no Hadamard $RG$-matrix from the
group ring $ \Z C_{4n}$-matrix except when $n=1$.  For the case $n=1$,
we showed in (\ref{eq:circhad}) an example of a Hadamard matrix from the
group ring $\Z C_4$.

A natural question arises: From other groups $G$, when do Hadamard
matrices exist?  Lemma~\ref{just1} tells us that in general it is
necessary for the group to be of order $4u^2$. 

We now provide a couple of illustrations of these matrices, first from
groups of direct products of cyclic groups, and then using a direct
product of the quaternion group with cyclic groups.

The following is a Hadamard $\Z (C_2\cross C_2)$-matrix:
$\begin{pmatrix} 1 &1 &| & 1& -1 \\ 1 & 1&|& -1 & 1 \\ \hline 1&-1&|&1&1 \\
-1&1&|&1&1 \end{pmatrix}$.

In the cyclic (circulant) case a Hadamard matrix is not obtained because
of a `twist' in one of the pairs $(+,-),(-,+)$ from the end of one block
to the beginning of the next.  By avoiding this `twist' a Hadamard
matrix that is `almost' circulant may be obtained.

Consider thus the case $n=16$ where the first row is
$$\begin{array}{rrrrrrrr} ++ & +- & ++ & +- & ++ & +-& -- & -+
\end{array}$$ and then proceed by avoiding the `twist' that a circulant
would introduce to get the following Hadamard matrix: $$
\left(\begin{array}{r|r|r|r|r|r|r|r} 
++ & +- & ++ & +- & ++ & +-& -- &-+ \\ ++ & -+ & ++ & -+ & ++ & -+& --
&+- \\ \hline -+& ++ & +- & ++ & +- & ++ & +-&-- \\ +-& ++ & -+ & ++ &
-+ & ++ & -+&-- \\ \hline --&-+& ++ & +- & ++ & +- & ++ & +- \\ --&+-&
++ & -+ & ++ & -+ & ++ & -+ \\ \hline +-&--&-+& ++ & +- & ++ & +- & ++
\\ -+&--&+-& ++ & -+ & ++ & -+ & ++ \\ \hline ++&+-&--&-+& ++ & +- & ++
& +- \\ ++&-+&--&+-& ++ & -+ & ++ & -+ \\ \hline +-&++&+-&--&-+& ++ & +-
& ++ \\ -+&++&-+&--&+-& ++ & -+ & ++ \\ \hline ++&+-&++&+-&--&-+& ++ &
+- \\ ++&-+&++&-+&--&+-& ++ & -+ \\ \hline +-&++&+-&++&+-&--&-+& ++ \\
-+&++&-+&++&-+&--&+-& ++
 \end{array}\right)
 $$ The above example is actually a Hadamard $RG$-matrix from the group
ring $\Z(C_2\cross C_8)$. This may be extended in an obvious way to form
Hadamard $RG$-matrices from $\Z( C_2\cross C_8 \cross C_4^t)$ for any
$t$. $C_4$ may be replaced by $C_2\cross C_2$ to give further examples.

Looking at how the twists work as in the proof of Theorem~\ref{hada}, a
Hadamard $RG$-matrix from the group ring $\Z (\mathbb{H}\cross C_2)$ may be
constructed as follows, where $\mathbb{H}$ is the quaternion group of
order $8$. This is an example of a Hadamard $RG$-matrix where $G$ is
non-commutative.  $$
\left(\begin{array}{r|r|r|r||r|r|r|r} 
++ & +- & ++ & +- & ++ & -+& -- &+- \\ ++ & -+ & ++ & -+ & ++ & +-& --
&-+ \\ \hline -+& ++ & -+ & ++ & +- & ++ & -+&-- \\ +-& ++ & +- & ++ &
-+ & ++ & +-&-- \\ \hline ++&+-& ++ & -+ & -- & +- & ++ & +- \\ ++&-+&
++ & +- & -- & -+ & ++ & -+ \\ \hline -+&++&+-& ++ & -+ & -- & -+ & ++
\\ +-&++&-+& ++ & +- & -- & +- & ++ \\ \hline \hline ++&-+&--&+-& ++ &
+- & ++ & +- \\ ++&+-&--&-+& ++ & -+ & ++ & -+ \\ \hline +-&++&-+&--&-+&
++ & -+ & ++ \\ -+&++&+-&--&+-& ++ & +- & ++ \\ \hline
--&+-&++&+-&++&+-& ++ & -+ \\ --&-+&++&-+&++&-+& ++ & +- \\ \hline
-+&--&-+&++&-+&++&+-& ++ \\ +-&--&+-&++&+-&++&-+& ++
\end{array}\right)
$$ There are `twists' here which fit together to cancel one another.
This may be extended to examples of Hadamard $RG$-matrices from $\Z(
\mathbb{H} \cross C_2 \cross \C_4^t)$ in an obvious way. $C_4$ may be
replaced by $C_2\cross C_2$ to give similar examples.

\affiliationone{
   Barry Hurley and Ted Hurley\\ National University of Ireland Galway\\
   Galway\\ Ireland.  \email{barry.hurley@nuigalway.ie\\
   ted.hurley@nuigalway.ie}}
\affiliationtwo{Paul
   Hurley\\ IBM Research Zurich\\ S\"aumerstrasse 4, CH-8803
   R\"uschlikon\\ Switzerland.  \email{pah@zurich.ibm.com} }

\end{document}